\numberwithin{equation}{section}
\let\blb\mathbb
\def \PP{{\blb P}}
\def \ZZ{{\blb Z}}
\def \NN{{\blb N}}
\def\Gal{\operatorname {Gal}}
\DeclareMathOperator{\Br}{Br}
\theoremstyle{definition}
\newtheorem{lemma}{Lemma}[section]
\newtheorem{theorem}[lemma]{Theorem}
\newtheorem{conjecture}[lemma]{Conjecture}
\newtheorem{remark}[lemma]{Remark}
\DeclareMathOperator\Hom{Hom}
\def\opp{\operatorname{op}}
\def\ind{\operatorname{ind}}
\mathchardef\mhyphen="2D
\newcounter{todocounter}
\DeclareDocumentCommand\addreference{g}{\stepcounter{todocounter}\todo[color = blue!30, fancyline]{\thetodocounter. Add reference\IfNoValueF{#1}{: #1}}\xspace}
\DeclareDocumentCommand\checkthis{g}{\stepcounter{todocounter}\todo[color = red!50, fancyline]{\thetodocounter. Check this\IfNoValueF{#1}{: #1}}\xspace}
\DeclareDocumentCommand\fixthis{g}{\stepcounter{todocounter}\todo[color = orange!50, fancyline]{\thetodocounter. Fix this\IfNoValueF{#1}{: #1}}\xspace}
\DeclareDocumentCommand\expand{g}{\stepcounter{todocounter}\todo[color = green!50, fancyline]{\thetodocounter. Expand\IfNoValueF{#1}{: #1}}\xspace}
\title{Non-split Brauer-Severi varieties do not admit full exceptional collections}
\author{Theo Raedschelders}
\date{ }
\begin{document}

\maketitle

\begin{abstract}
Recently, Novakovi\'c conjectured that non-split Brauer-Severi varieties do not admit full strong exceptional collections. In this short note, we explain how a stronger version of this conjecture follows easily from known results on noncommutative motives.
\end{abstract}

\newcommand\blfootnote[1]{%
  \begingroup
  \renewcommand\thefootnote{}\footnote{#1}%
  \addtocounter{footnote}{-1}%
  \endgroup
}

\section{Introduction}

For an arbitrary field $k$, Novakovi\'c stated the following as a conjecture in~\cite{novakovic2015tilting}:
	\begin{conjecture}
	\label{novikov}
	Let $X \neq \PP_k^n$ be a $n$-dimensional Brauer-Severi variety. Then $D^b(X)$ does not admit a full strongly exceptional collection.
	\end{conjecture}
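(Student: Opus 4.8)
The plan is to deduce a statement stronger than Conjecture~\ref{novikov} --- namely that a non-split Brauer-Severi variety admits no full exceptional collection, strong or not --- from two ingredients: Bernardara's semiorthogonal decomposition of $D^b(X)$, and the description of morphism groups in the category $\mathrm{NChow}(k)$ of noncommutative Chow motives with \emph{integral} coefficients. Write $X=\mathrm{BS}(A)$ for the central simple $k$-algebra $A$ of degree $n+1$; the hypothesis $X\neq\PP^n_k$ means precisely that $A$ is non-split, i.e.\ $\ind(A)>1$.

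First I would argue that a full exceptional collection forces a rigid description of the noncommutative motive. If $D^b(X)$ admits a full exceptional collection of length $N$, then --- passing to the canonical dg enhancements and using that the universal additive invariant carries a semiorthogonal decomposition to the corresponding direct sum --- we get $U(X)\cong U(k)^{\oplus N}$ in $\mathrm{NChow}(k)$. On the other hand, Bernardara's decomposition $D^b(X)=\langle D^b(k),D^b(A),\dots,D^b(A^{\ot n})\rangle$ gives $U(X)\cong\bigoplus_{i=0}^n U(A^{\ot i})$. Applying $\Hom_{\mathrm{NChow}(k)}(U(k),-)\cong K_0(-)$ and using $K_0(A^{\ot i})\cong\ZZ$ for all $i$ first pins down $N=n+1$.

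The heart of the argument is then an elementary determinant computation. Using the identification $\Hom_{\mathrm{NChow}(k)}(U(B),U(C))\cong K_0(B^{\opp}\ot_k C)$ and the fact that composition corresponds to tensoring bimodules, write an isomorphism $\Phi\colon\bigoplus_{i=0}^n U(A^{\ot i})\xrightarrow{\sim}\bigoplus_{j=0}^n U(k)$ with components $\phi_{ji}\colon U(A^{\ot i})\to U(k)$, and its inverse with components $\psi_{ij}\colon U(k)\to U(A^{\ot i})$. Here $\phi_{ji}$ is an integer multiple $a_{ji}$ of the class of the simple left $A^{\ot i}$-module and $\psi_{ij}$ an integer multiple $b_{ij}$ of the class of the simple right $A^{\ot i}$-module, so that $\phi_{ji}\circ\psi_{ij'}\in K_0(k)=\ZZ$ equals $a_{ji}\,b_{ij'}\cdot\dim_k D_i=a_{ji}\,b_{ij'}\cdot\ind(A^{\ot i})^2$, where $D_i$ is the division algebra underlying $A^{\ot i}$. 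The relation $\Phi\circ\Phi^{-1}=\id$ becomes the matrix identity $\mathbf a\,E\,\mathbf b=I_{n+1}$ over $\ZZ$, with $\mathbf a=(a_{ji})$, $\mathbf b=(b_{ij})$, and $E=\diag(\ind(A^{\ot 0})^2,\dots,\ind(A^{\ot n})^2)$. Taking determinants forces the positive integer $\prod_{i=0}^n\ind(A^{\ot i})^2$ to equal $1$, hence $\ind(A)=\ind(A^{\ot 1})=1$; this contradicts the non-splitness of $A$, and we are done.

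I do not expect a genuine obstacle: the whole argument is short once the inputs are available. The steps deserving care are citing Bernardara's decomposition and the compatibility of the universal additive invariant with semiorthogonal decompositions in the appropriate generality, and making the morphism groups and composition law in $\mathrm{NChow}(k)$ precise enough to see that the tensor product over $A^{\ot i}$ of the simple right and left modules really is the underlying division algebra (so that the entries of $E$ are the squared indices). Finally, it is essential to work with integral coefficients: rationally the Brauer class vanishes and the displayed matrix identity becomes vacuous.
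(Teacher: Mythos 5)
Your argument is correct and proves the stronger statement that a non-split Brauer--Severi variety admits no full exceptional collection at all (strong or not), which settles the conjecture; but it takes a genuinely different route from the paper. The paper likewise starts from Bernardara's decomposition and additivity of $U$, but then invokes two structural results of Tabuada--Van den Bergh: the fibre-product description of $\{U(k)^{\oplus n}\}$ inside $\text{NChow}(k)$ (Theorem~\ref{tabvdb2}, used to convert a sum of motives of \'etale algebras into $U(k)^{\oplus d}$), and the classification of direct sums of motives of central simple algebras via $p$-primary Brauer classes (Theorem~\ref{tabvdb}, used to conclude $[A]=[k]$). That machinery buys a stronger theorem: non-existence of full \emph{\'etale} exceptional collections, where the endomorphism algebras of the exceptional objects may be arbitrary \'etale $k$-algebras rather than $k$ itself. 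Your determinant computation replaces both citations by an elementary, self-contained argument --- in effect you reprove by hand the special case of Theorem~\ref{tabvdb} in which one side consists of copies of $U(k)$ --- and the details check out: the Morita-theoretic identification of the composite of the two simple modules with the underlying division algebra gives $\phi_{ji}\circ\psi_{ij'}=a_{ji}b_{ij'}\ind(A^{\otimes i})^{2}$, and the identity $\mathbf{a}E\mathbf{b}=I_{n+1}$ over $\ZZ$ does force $\prod_{i}\ind(A^{\otimes i})^{2}=1$. What your approach cannot do without further input is handle \'etale endomorphism algebras, since $U(L)\not\simeq U(k)^{\oplus \dim_k L}$ for a non-split \'etale $k$-algebra $L$; that reduction is exactly what the fibre-product theorem supplies in the paper's proof. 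So the trade-off is: your route is more elementary and avoids the heavier inputs, while the paper's route yields the sharper conclusion.
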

He proves the conjecture in dimension $n \leq 3$~\cite{novakovic2016no} by exploiting the transitivity of the braid group action on full exceptional collections for $\PP^n_k$ to reduce to an equivalence $D^b(A) \cong D^b(k)$. If $A \cong M_l(D)$, for $D$ a division algebra over $k$, these are just the categories of $\ZZ$-graded vector spaces over $D$, respectively $k$, so there is an equivalence only if $D$ is isomorphic to $k$. Since the transitivity of the braid group action (which is only established for $n \leq 3$) is only used to be able to reduce to a single semi-orthogonal component, this suggests that noncommutative motives might provide the right framework for this conjecture. Using some results from~\cite{tabuada2014noncommutative} on noncommutative motives of separable algebras, we prove a slightly stronger version of Conjecture~\ref{novikov}, showing that non-split Brauer-Severi varieties do not admit full \'etale exceptional collections. 

\section{Noncommutative motives of separable algebras}

To any small dg-category $\mathcal{A}$, one can associate (functorially) its noncommutative motive $U(\mathcal{A})$, which takes values in a category ${\tt Hmo}_0(k)$. This category has as objects small dg-categories, and for two such categories $\mathcal{A}$ and $\mathcal{B}$,
	$$
	\Hom_{{\tt Hmo}_0(k)}(\mathcal{A},\mathcal{B}) \cong K_0 {\tt rep}(\mathcal{A},\mathcal{B}),
	$$
where ${\tt rep}(\mathcal{A},\mathcal{B})$ is the full triangulated subcategory of $D(\mathcal{A}^{\opp} \otimes^{\mathbb{L}} \mathcal{B})$ consisting of those $\mathcal{A}$-$\mathcal{B}$-bimodules $B$ such that for every $x \in \mathcal{A}$, the right $\mathcal{B}$-module $B(x,-)$ is a compact object in $D(\mathcal{B})$. The composition is induced by the derived tensor product of bimodules.

More details on the construction of $U$ can be found in~\cite{tabuada2005additive}, but for the purposes of this note, we will only need that $U$ is a ``universal additive invariant''. An additive invariant is any functor $E:{\tt dgcat}(k) \to D$ taking values in an additive category $D$ such that:
\begin{enumerate}
\item it sends dg-Morita equivalences to isomorphisms,
\item for any pre-triangulated dg-category $\mathcal{A}$, with full pre-triangulated dg-subcategories $\mathcal{B}$ and $\mathcal{C}$ giving rise to a semi-orthogonal decomposition
$$
{\tt H}^0(\mathcal{A})=\langle {\tt H}^0(\mathcal{B}), {\tt H}^0(\mathcal{C}) \rangle,
$$
the morphism $E(\mathcal{B}) \oplus E(\mathcal{C}) \to E(\mathcal{A})$ induced by the inclusions is an isomorphism.
\end{enumerate}

We now review some results from~\cite{tabuada2014noncommutative}. Remember that the category of noncommutative Chow motives $\text{NChow}(k)$ is defined as the idempotent completion of the full subcategory of ${\tt Hmo}_0(k)$ containing the smooth and proper dg-categories. Now let $\text{Sep}(k)$ (respectively $\text{CSep}(k)$) denote the full subcategory of $\text{NChow}(k)$ consisting of the $U(A)$, for $A$ a separable (respectively commutative separable) $k$-algebra. Also let $\text{CSA}(k)^{\oplus}$ denote the closure under finite direct sums of the full subcategory of $\text{NChow}(k)$ consisting of the $U(A)$, for $A$ a central simple $k$-algebras. Note that the $\oplus$ is there since central simple $k$-algebras are not closed under products, whereas (commutative) separable algebras are. In this way $\text{Sep}(k), \text{CSep}(k)$ and $\text{CSA}(k)^{\oplus}$ are additive symmetric monoidal categories. 

	\begin{theorem}\cite[Corollary 2.13]{tabuada2014noncommutative}
	\label{tabvdb2}
	There is an equivalence of categories
	$$
	\{U(k)^{\oplus n} \vert n \in \NN\}\simeq\text{CSA}(k)^{\oplus} \times_{\text{Sep}(k)} 
	\text{CSep}(k),
	$$
	i.e. $\{U(k)^{\oplus n} \vert n \in \NN\}$ is a $2$-pullback of categories with respect to the obvious inclusion morphisms.
	\end{theorem}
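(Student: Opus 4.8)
The plan is to write down an explicit comparison functor $\Phi$ from $\{U(k)^{\oplus n}\mid n\in\NN\}$ into the $2$-pullback $\text{CSA}(k)^{\oplus}\times_{\text{Sep}(k)}\text{CSep}(k)$ and to check that it is an equivalence of categories, the real content being essential surjectivity. Since $k$ is simultaneously a central simple $k$-algebra and a commutative separable $k$-algebra, the object $U(k)^{\oplus n}$ lies in both $\text{CSA}(k)^{\oplus}$ and $\text{CSep}(k)$, and the two inclusions into $\text{Sep}(k)$ send it to the same object of $\text{NChow}(k)$; so I would set $\Phi(U(k)^{\oplus n})=(U(k)^{\oplus n},U(k)^{\oplus n},\id)$ and $\Phi(f)=(f,f)$ on morphisms. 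This is well defined and functorial because $\{U(k)^{\oplus n}\mid n\in\NN\}$, $\text{CSA}(k)^{\oplus}$ and $\text{CSep}(k)$ are all \emph{full} subcategories of $\text{NChow}(k)$. Full faithfulness of $\Phi$ is then formal: a morphism in the $2$-pullback between two objects of the form $\Phi(-)$ is a pair $(f,g)$ of morphisms in $\text{CSA}(k)^{\oplus}$ and $\text{CSep}(k)$ with the same image in $\text{Sep}(k)$, and by fullness $f$ and $g$ coincide as an element of $\Hom_{\text{NChow}(k)}(U(k)^{\oplus n},U(k)^{\oplus m})$, which is exactly a morphism of $\{U(k)^{\oplus n}\mid n\in\NN\}$.

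For essential surjectivity, fix an object $(X,Y,\phi)$, so $X\in\text{CSA}(k)^{\oplus}$, $Y\in\text{CSep}(k)$ and $\phi\colon X\xrightarrow{\sim}Y$ is an isomorphism in $\text{Sep}(k)$. By definition $Y=U(B)$ for $B=\prod_{j=1}^{s}L_{j}$ a finite product of finite separable field extensions $L_{j}/k$, and $X$ is a finite direct sum of motives of central simple $k$-algebras. Using Wedderburn's theorem, Morita invariance of $U$, and additivity of $U$ along orthogonal decompositions of the shape $D^{b}(\prod_{j}L_{j})=\langle D^{b}(L_{1}),\dots,D^{b}(L_{s})\rangle$, one obtains $X\cong\bigoplus_{i=1}^{r}U(D_{i})$ with the $D_{i}$ central division $k$-algebras and $Y\cong\bigoplus_{j=1}^{s}U(L_{j})$. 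From the isomorphism $X\cong Y$ I would extract two numerical constraints. First, for a separable $k$-algebra $C$ one has ${\tt rep}(k,C)=\perf(C)$, hence $\Hom_{\text{NChow}(k)}(U(k),U(C))\cong K_{0}(C)$, and $K_{0}$ of a division ring, resp.\ of a field, is $\ZZ$; additivity of $\text{NChow}(k)$ then gives $\ZZ^{r}\cong\Hom(U(k),X)\cong\Hom(U(k),Y)\cong\ZZ^{s}$, so $r=s$. Second, the scalar-extension functor $-\otimes_{k}\bar k\colon\text{NChow}(k)\to\text{NChow}(\bar k)$ carries each $U(D_{i})$ to $U(\bar k)$ and each $U(L_{j})$ to $U(\bar k)^{\oplus[L_{j}:k]}$; since $\End(U(\bar k))=K_{0}(\bar k)=\ZZ$, an isomorphism between finite direct sums of copies of $U(\bar k)$ forces equality of the number of summands, whence $r=\sum_{j=1}^{s}[L_{j}:k]$. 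Combining $r=s$ and $r=\sum_{j}[L_{j}:k]$ with $[L_{j}:k]\ge1$ yields $[L_{j}:k]=1$ for all $j$, i.e.\ $Y=U(k)^{\oplus s}$.

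To finish essential surjectivity I would exhibit an isomorphism $(X,Y,\phi)\xrightarrow{\sim}\Phi(U(k)^{\oplus s})=(U(k)^{\oplus s},U(k)^{\oplus s},\id)$ in the $2$-pullback, namely the pair $(\phi,\id_{U(k)^{\oplus s}})$. The compatibility condition reads $\id\circ\phi=\id\circ\phi$; the second component is an isomorphism in $\text{CSep}(k)$; and although $\phi$ is a priori only an isomorphism of $\text{Sep}(k)$, it is automatically an isomorphism in the full subcategory $\text{CSA}(k)^{\oplus}$, because $X$ and $U(k)^{\oplus s}$ both lie there and the inverse of $\phi$ in $\text{Sep}(k)$ is, by fullness, a morphism of $\text{CSA}(k)^{\oplus}$ as well. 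Hence $(X,Y,\phi)$ is in the essential image of $\Phi$, and $\Phi$ is an equivalence.

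The parts that are genuinely not formal — and that a self-contained proof would have to import from the literature — are that separable $k$-algebras define smooth and proper dg-categories (so that they land in $\text{NChow}(k)$ and the Hom-groups between their motives are computed as $K_{0}$ of the pertinent tensor-product algebra), and the construction and additivity of the scalar-extension functor on noncommutative Chow motives. Granting these, the rest is bookkeeping with Wedderburn's theorem and the universal property of $U$ as an additive invariant; the only place I would be careful is in making sure the two numerical constraints are compared correctly, since both ultimately rest on the single fact that an isomorphism of finitely generated free abelian groups preserves rank.
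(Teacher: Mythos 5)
Your argument is correct, but note that the paper does not prove this statement at all: it is quoted verbatim from \cite[Corollary 2.13]{tabuada2014noncommutative}, so there is no in-paper proof to compare against. Your reconstruction is sound modulo the inputs you explicitly flag (that separable algebras give smooth proper dg-categories with $\Hom_{{\tt Hmo}_0(k)}(U(A),U(B))\cong K_0(A^{\opp}\otimes B)$, and the existence of an additive base-change functor on noncommutative Chow motives). The two counting constraints are the heart of the matter and are correctly extracted: $\Hom(U(k),-)$ gives $r=s$, and base change to $k_{\text{sep}}$ gives $r=\sum_j[L_j:k]$, which together force all $L_j=k$; the remaining identification of $X$ with $U(k)^{\oplus s}$ then comes for free from $\phi$ and fullness, as you observe. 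Two cosmetic points: you should write $Y\cong U(k)^{\oplus s}$ rather than an equality, so the witnessing morphism in the $2$-pullback is a pair $(\psi\circ\phi,\psi)$ for a chosen isomorphism $\psi\colon Y\to U(k)^{\oplus s}$ in $\text{CSep}(k)$, not $(\phi,\id)$; and the ``obvious inclusion'' $\text{CSA}(k)^{\oplus}\hookrightarrow\text{Sep}(k)$ implicitly identifies $U(A_1)\oplus\cdots\oplus U(A_n)$ with $U(A_1\times\cdots\times A_n)$, which is harmless but worth saying. The proof in the cited reference is organized somewhat differently --- it goes through the explicit description of the Hom-groups between motives of separable algebras and the equivalence $\text{CSep}(k)\simeq\text{Perm}(\Gal(k_{\text{sep}}/k))$ rather than an explicit base-change functor --- but it is ultimately the same kind of rank-counting argument, so your route is a legitimate and arguably more elementary alternative.
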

	
For a central simple algebra $A$ over $k$, denote by $\ind(A)$ and $\deg(A)$ the index (respectively degree) of $A$. Then by~\cite[Proposition 4.5.16]{MR2266528}, $A$ admits a $p$-primary decomposition 
	$$
	A=\bigotimes_{i=1}^k A^{p_i},
	$$
where $A^{p_i}$ is uniquely characterised by the property $\ind(A^{p_i})=p_i^{n_i}$ if
	$$
	\ind(A)=p_1^{n_1} \cdots p_k^{n_k}
	$$
is the primary decomposition. 

	\begin{theorem}\cite[Theorem 2.19]{tabuada2014noncommutative}
	\label{tabvdb}
	Given central simple $k$-algebras $A_1, \ldots, A_n$ and $B_1, \ldots, B_m$, the 	
	following two conditions are equivalent:
		\begin{enumerate}
		\item There is an isomorphism of noncommutative motives:
			$$
			U(A_1) \oplus \cdots \oplus U(A_n) \simeq U(B_1) \oplus \cdots \oplus U(B_m).
			$$
		\item The equality $n=m$ holds, and for all $1 \leq j \leq n$ and all $p$
			$$
			[B_j^p]=[A_{\sigma_p(j)}^p]
			$$
			holds in $\Br(k)$, for some permutations $\sigma_p$ depending on $p$.
		\end{enumerate}
	\end{theorem}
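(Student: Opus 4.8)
The plan is to prove the two implications separately. The implication $(1)\Rightarrow(2)$ I would obtain by a ``$p$\dash local Krull--Schmidt'' argument. For the equality $n=m$ it suffices to apply the additive invariant $\Hom_{\text{NChow}(k)}(U(k),-)$, which sends $U(A)$ to $K_0(A)\cong\ZZ$ and therefore sends the two sides of the isomorphism in $(1)$ to $\ZZ^{\oplus n}$ and $\ZZ^{\oplus m}$. For the rest, fix a prime $p$ and pass to the category $\text{NChow}(k)_{(p)}$ obtained by tensoring all Hom-groups with $\ZZ_{(p)}$; the given isomorphism survives this localization. The key point is that $\End(U(A))=K_0(A^{\opp}\otimes_k A)\otimes_{\ZZ}\ZZ_{(p)}\cong\ZZ_{(p)}$ is now a \emph{local} ring, so each $U(A_i)$ and each $U(B_j)$ becomes an indecomposable object with local endomorphism ring. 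By the Krull--Remak--Schmidt--Azumaya theorem the two decompositions $\bigoplus_i U(A_i)\cong\bigoplus_j U(B_j)$ in $\text{NChow}(k)_{(p)}$ are then related by a bijection $\sigma_p$ of $\{1,\dots,n\}$ with $U(A_{\sigma_p(j)})\cong U(B_j)$ in $\text{NChow}(k)_{(p)}$ for every $j$.

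It remains, for $(1)\Rightarrow(2)$, to characterize when $U(A)\cong U(B)$ in $\text{NChow}(k)_{(p)}$. Here $\Hom_{\text{NChow}(k)}(U(A),U(B))=K_0(A^{\opp}\otimes_k B)\cong\ZZ$, and a direct length count shows that the composition pairing
\[
\Hom(U(A),U(B))\otimes\Hom(U(B),U(A))\longrightarrow\End(U(A))\cong\ZZ
\]
is multiplication by $\ind([B]-[A])^2$: realizing $A^{\opp}\otimes_k B$ as $M_t(E)$ with $E$ a division algebra in the class $[B]-[A]$, the generators of the Hom-groups are the simple modules, and computing the $k$-dimension of their composite bimodule produces exactly this factor (the same holds in the other composition order). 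Consequently $U(A)$ and $U(B)$ become isomorphic in $\text{NChow}(k)_{(p)}$ precisely when $\ind([B]-[A])^2$ is a unit in $\ZZ_{(p)}$, i.e.\ when $p\nmid\ind([B]-[A])$, i.e.\ when the $p$-primary components of $[A]$ and $[B]$ coincide in $\Br(k)$. Applied to $A_{\sigma_p(j)}$ and $B_j$ this gives $[B_j^p]=[A_{\sigma_p(j)}^p]$ for all $j$ and all $p$, which is condition $(2)$.

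For $(2)\Rightarrow(1)$: only finitely many primes divide the indices of the $A_i$, and for each such prime $p$ the permutation $\sigma_p$ is a product of transpositions, so it is enough to prove a \emph{swap lemma}: if $A',B'$ arise from two central simple $k$-algebras $A,B$ by interchanging their $p$-primary components (for a single prime $p$), then $U(A)\oplus U(B)\cong U(A')\oplus U(B')$. Applying this repeatedly, one prime at a time (moves for one prime leave the components at the other primes untouched), one modifies the family $(A_i)$ into a family $(A_i')$ with $[A_j']=[B_j]$ for all $j$ and $\bigoplus_i U(A_i)\cong\bigoplus_i U(A_i')\cong\bigoplus_j U(B_j)$. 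To prove the swap lemma, write $A=A^p\otimes_k A^{p'}$ with $A^{p'}$ the prime-to-$p$ part, similarly $B$, and tensor with the invertible object $U((A^p\otimes_k A^{p'})^{\opp})$; using that $U$ is symmetric monoidal and that central simple algebras give $\otimes$-invertible motives (since $A\otimes_k A^{\opp}$ is Morita-equivalent to $k$), a short manipulation reduces the swap lemma to the single identity
\[
U(k)\oplus U(C\otimes_k D)\;\cong\;U(C)\oplus U(D)
\]
for division $k$-algebras $C$ of $p$-power index and $D$ of index prime to $p$, so that $\gcd(\ind C,\ind D)=1$.

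I expect this ``coprime splitting'' identity to be the main obstacle. Everything else above is formal, but this one is not: it fails without the coprimality hypothesis (for instance when $[C]=[D]$ is a nonzero $2$-torsion class, as the already-established direction $(1)\Rightarrow(2)$ shows), so coprimality must enter essentially. I would prove it by exhibiting a pretriangulated dg-category carrying two semiorthogonal decompositions---one whose pieces are Morita-equivalent to $k$ and $C\otimes_k D$, the other whose pieces are Morita-equivalent to $C$ and $D$---and then invoking the defining property of additive invariants that $U$ turns semiorthogonal decompositions into direct sums and Morita equivalences into isomorphisms. If no such model is readily available, the alternative is to write the desired isomorphism in $\text{NChow}(k)$ as an explicit $2\times 2$ matrix of classes of bimodules between $k,C,D$ and $C\otimes_k D$ and to check invertibility through the composition law computed above, the Bézout relation $a\ind C+b\ind D=1$ being exactly what makes that matrix invertible.
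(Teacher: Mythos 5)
This statement is imported verbatim from \cite[Theorem 2.19]{tabuada2014noncommutative}; the paper under review contains no proof of it, so the only meaningful comparison is with the argument in that reference. Measured against it, your outline is essentially the intended proof and is correct in its main lines: the identification $\Hom_{\text{NChow}(k)}(U(A),U(B))\cong K_0(A^{\opp}\otimes_k B)\cong\ZZ$, the count $n=m$ via $\Hom(U(k),-)$, localization at $p$ followed by Krull--Remak--Schmidt--Azumaya (legitimate here since the localized endomorphism rings are $\ZZ_{(p)}$, hence local), the criterion ``$U(A)\cong U(B)$ in the $p$-localization iff $p\nmid\ind([B]-[A])$ iff $[A^p]=[B^p]$'', and the reduction of $(2)\Rightarrow(1)$, by tensoring with the invertible object $U(A^{\opp})$ and working one prime at a time, to the coprime identity $U(k)\oplus U(C\otimes_k D)\cong U(C)\oplus U(D)$ for $\gcd(\ind C,\ind D)=1$. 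All of this matches the strategy of Tabuada--Van den Bergh.

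Two places are thinner than they should be, both around the step you yourself flag as the main obstacle. First, the verification of the coprime identity needs the full ternary composition constant, not just the pairing you computed: writing $s_{XY}$ for the positive generator of $\Hom(U(X),U(Y))$, the same dimension count gives $s_{YZ}\circ s_{XY}=\tfrac{\ind([Y]-[X])\,\ind([Z]-[Y])}{\ind([Z]-[X])}\,s_{XZ}$, of which your $\ind([B]-[A])^2$ is only the case $Z=X$; the general constant is what controls the off-diagonal entries of the matrix products. Second, of your two proposed routes to the coprime identity, the geometric one (a dg-category carrying two semiorthogonal decompositions) is not how the cited proof goes and no such model is readily available; the explicit-matrix route is the correct one, and it does close: with $e=\ind C$, $f=\ind D$ and $ue^2+vf^2=1$, the coefficient matrices $\left(\begin{smallmatrix}1&1\\-v&u\end{smallmatrix}\right)$ for $U(k)\oplus U(C\otimes_k D)\to U(C)\oplus U(D)$ and $\left(\begin{smallmatrix}u&-1\\v&1\end{smallmatrix}\right)$ for the other direction are mutually inverse once the composition constants above are inserted, all eight entry equations reducing to $ue^2+vf^2=1$ or to trivial cancellations. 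So the B\'ezout relation enters exactly where you predicted, and your sketch, completed with this computation, is a faithful reconstruction of the cited proof.
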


\begin{remark}
Though the isomorphism classes of objects in $\text{CSA}(k)^{\oplus}$ are in some sense understood by Theorem~\ref{tabvdb}, this is not true for $\text{CSep}(k)$. In fact, using the (additive) equivalence $\text{CSep}(k) \simeq \text{Perm}(G)$, where $G=\Gal(k_{\text{sep}}/k)$, and $\text{Perm}(G)$ is the category of permutation $G$-modules, interesting examples can be obtained from integral representation theory, see~\cite[Remark 2.5, 2.6]{tabuada2014noncommutative}.
\end{remark}

\section{Brauer-Severi varieties and full \'etale exceptional collections}

Denote by $BS(A)$ the Brauer-Severi variety associated to a central simple $k$-algebra $A$. We will say (see also~\cite{Orlov:2014aa}) that an object $E \in D^b(BS(A))$ satisfying $\Hom(E,E[i])=0$ for all $i \neq 0$ is 
	\begin{itemize}
	\item semi-exceptional if $\Hom(E,E)=S$ is a semisimple $k$-algebra,
	\item \'etale exceptional if $\Hom(E,E)=L$ is an \'etale $k$-algebra.
	\end{itemize}
It is well known~\cite{bernardara2009semiorthogonal} that $BS(A)$ has a full semi-exceptional collection giving rise to a semi-orthogonal decomposition
	\begin{equation}
	\label{SOD}
	D^b(BS(A)) = \langle D^b(k), D^b(A), \ldots, D^b(A^{\otimes \deg(A)-1}) \rangle.
	\end{equation}

The following theorem now provides a positive answer to Conjecture~\ref{novikov}.

\begin{theorem}
\label{theorem:nofullexc}
Non-split Severi-Brauer varieties do not admit full \'etale exceptional collections.
\end{theorem}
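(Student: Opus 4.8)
The plan is to argue by contradiction: assuming $BS(A)$ is non-split but $D^b(BS(A))$ carries a full \'etale exceptional collection, I would apply the universal additive invariant $U$ to two semi-orthogonal decompositions of $D^b(BS(A))$ — the standard one \eqref{SOD}, whose components are derived categories of central simple algebras, and the hypothetical one coming from the \'etale collection, whose components are derived categories of \'etale algebras — and then let the rigidity statements of Theorems~\ref{tabvdb2} and~\ref{tabvdb} collide, forcing $[A]=0$ in $\Br(k)$.

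Concretely, write the full \'etale exceptional collection as $E_1,\dots,E_r$ with $\End(E_i)=L_i$ an \'etale $k$-algebra. Exactly as for the semi-exceptional collection behind \eqref{SOD}, the admissible subcategory generated by $E_i$ is equivalent to $D^b(L_i)$, so I obtain a semi-orthogonal decomposition $D^b(BS(A))=\langle D^b(L_1),\dots,D^b(L_r)\rangle$. Since $BS(A)$ is smooth and proper, $U(BS(A))\in\text{NChow}(k)$, and applying the additivity of $U$ to this decomposition and to \eqref{SOD} yields isomorphisms in $\text{NChow}(k)$
$$
\bigoplus_{i=0}^{\deg(A)-1} U(A^{\otimes i})\;\cong\;U(BS(A))\;\cong\;\bigoplus_{j=1}^{r} U(L_j).
$$
The left-hand side, being a finite direct sum of motives of central simple $k$-algebras, is an object of $\text{CSA}(k)^{\oplus}$; the right-hand side is an object of $\text{CSep}(k)$, since $L_1\times\cdots\times L_r$ is a commutative separable $k$-algebra and $\bigoplus_j U(L_j)\cong U(L_1\times\cdots\times L_r)$ by additivity. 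As $\text{CSA}(k)^{\oplus}$ and $\text{CSep}(k)$ are full subcategories of $\text{NChow}(k)$ mapping into $\text{Sep}(k)$, this isomorphism is precisely the data of an object of the $2$-pullback $\text{CSA}(k)^{\oplus}\times_{\text{Sep}(k)}\text{CSep}(k)$.

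By Theorem~\ref{tabvdb2} this pullback is equivalent to $\{U(k)^{\oplus n}\mid n\in\NN\}$, the equivalence sending $U(k)^{\oplus n}$ to the pair $\big(U(k)^{\oplus n},U(k)^{\oplus n}\big)$; essential surjectivity then gives, for some $n$, an isomorphism $\bigoplus_{i=0}^{\deg(A)-1}U(A^{\otimes i})\cong U(k)^{\oplus n}$ in $\text{CSA}(k)^{\oplus}$. Finally I would feed this into Theorem~\ref{tabvdb}, taking the central simple algebras $A^{\otimes 0},\dots,A^{\otimes\deg(A)-1}$ on one side and $n$ copies of $k$ on the other: condition~(1) holds, so condition~(2) forces $n=\deg(A)$ and, for every prime $p$, a permutation matching each $A^{\otimes i}$ with a copy of $k$ at the level of $p$-primary components in $\Br(k)$; since $[k^p]=0$, this means $[(A^{\otimes i})^p]=0$ for every $i$ and every prime $p$. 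Because $A$ is non-split we have $\deg(A)\ge 2$, so $A=A^{\otimes 1}$ itself occurs in the list, and hence $[A^p]=0$ for all $p$; the $p$-primary decomposition $A\cong\bigotimes_p A^p$ then gives $[A]=0$ in $\Br(k)$, i.e. $BS(A)$ is split — a contradiction. The only step requiring genuine care, rather than bookkeeping, is the passage from a full \'etale exceptional collection to a semi-orthogonal decomposition with components $D^b(L_i)$, so that the additivity axiom of $U$ can be invoked; this rests on the standard fact that an object with vanishing higher self-extensions and semisimple endomorphism algebra generates an admissible subcategory equivalent to the derived category of that algebra.
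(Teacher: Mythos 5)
Your proposal is correct and follows essentially the same route as the paper: apply the universal additive invariant $U$ to the two semi-orthogonal decompositions, use Theorem~\ref{tabvdb2} (via the $2$-pullback) to replace the sum of motives of central simple algebras by $U(k)^{\oplus n}$, and conclude via Theorem~\ref{tabvdb} that $[A]=0$ in $\Br(k)$. The only difference is that you spell out a few points the paper leaves implicit (that the \'etale collection yields components $D^b(L_i)$, that the number of components need not a priori equal $\deg(A)$, and how essential surjectivity of the pullback equivalence is used), which is careful but not a change of method.
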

\begin{proof}
Suppose $A$ is non-split and $\deg(A)=d$. Then if $BS(A)$ has a full \'etale exceptional collection, we deduce from~\eqref{SOD} and additivity of $U(-)$ with respect to semi-orthogonal decompositions that there is an isomorphism
$$
U(k) \oplus U(A) \oplus \cdots \oplus U(A^{\otimes d-1}) \simeq U(D^b(BS(A))) \cong U(L_1) \oplus \cdots \oplus U(L_d),
$$
where the $L_i$ are \'etale $k$-algebras. Using 
Theorem~\ref{tabvdb2} and the universal property of fibre products, this isomorphism 
gives rise to an isomorphism
$$
U(k) \oplus U(A) \oplus \cdots \oplus U(A^{\otimes d-1}) \simeq U(k)^{\oplus d}.
$$
Now by Theorem~\ref{tabvdb}, for all $p: [A^p]=[k]$ in $\Br(k)$, so $[A]=[k]$ or in other 
words $A$ should split.
\end{proof}
	
\begin{remark}
This result formalizes (in this case) the intuition that for varieties defined over arbitrary fields, one should consider semi-exceptional collections instead of usual exceptional collections.
\end{remark}

\providecommand{\bysame}{\leavevmode\hbox to3em{\hrulefill}\thinspace}
\providecommand{\MR}{\relax\ifhmode\unskip\space\fi MR }
\providecommand{\MRhref}[2]{%
  \href{http://www.ams.org/mathscinet-getitem?mr=#1}{#2}
}
\providecommand{\href}[2]{#2}


\begin{thebibliography}{1}

\bibitem{bernardara2009semiorthogonal}
Marcello Bernardara, \emph{A semiorthogonal decomposition for
  {B}rauer--{S}everi schemes}, Mathematische Nachrichten \textbf{282} (2009),
  no.~10, 1406--1413.

\bibitem{MR2266528}
Philippe Gille and Tam{{\'a}}s Szamuely, \emph{Central simple algebras and
  {G}alois cohomology}, Cambridge Studies in Advanced Mathematics, vol. 101,
  Cambridge University Press, Cambridge, 2006.

\bibitem{novakovic2015tilting}
Sa{\v{s}}a Novakovi{\'c}, \emph{Tilting objects on some global quotient
  stacks}, arXiv:1503.00522 (2015).

\bibitem{novakovic2016no}
\bysame, \emph{No full exceptional collections on non-split {B}rauer--{S}everi
  varieties of dimension $\leq 3$}, arXiv:1603.08373 (2016).

\bibitem{Orlov:2014aa}
Dmitri Orlov, \emph{Smooth and proper noncommutative schemes and gluing of dg
  categories},  (2014).

\bibitem{tabuada2005additive}
Gon{\c{c}}alo Tabuada, \emph{Additive invariants of dg-categories}, Internat.
  Math. Res. Notices (2005), no.~53, 3309--3339.

\bibitem{tabuada2014noncommutative}
Gon{\c{c}}alo Tabuada and Michel Van~den Bergh, \emph{Noncommutative motives of
  separable algebras}, arXiv:1411.7970 (2014).

\end{thebibliography}
\bibliographystyle{amsplain}
\def\cprime{$'$} \def\cprime{$'$} \def\cprime{$'$}
\providecommand{\bysame}{\leavevmode\hbox to3em{\hrulefill}\thinspace}
\providecommand{\MR}{\relax\ifhmode\unskip\space\fi MR }
\providecommand{\MRhref}[2]{%
  \href{http://www.ams.org/mathscinet-getitem?mr=#1}{#2}
}
\providecommand{\href}[2]{#2}

\end{document}